\newtheorem{theorem}{Theorem}
\newtheorem{lemma}[theorem]{Lemma}
\newtheorem{corollary}[theorem]{Corollary}
\newtheorem{proposition}[theorem]{Proposition}
\newtheorem{remark}{Remark}
\begin{document}
\title{An Inexact Newton-like conditional gradient method for constrained nonlinear systems}

\author{
    M.L.N. Gon\c calves
    \thanks{Instituto de Matem\'atica e Estat\'istica, Universidade Federal de Goi\'as, Campus II- Caixa
    Postal 131, CEP 74001-970, Goi\^ania-GO, Brazil.
 (E-mails: {\tt
       maxlng@ufg.br} and {\tt fabriciaro@gmail.com}). The work of these authors was
    supported in part by CAPES,  CNPq Grants  406250/2013-8, 444134/2014-0 and 309370/2014-0.}
  \and F.R. Oliveira  \footnotemark[1]
}
 \maketitle
\begin{abstract}
In this paper, we propose an  inexact Newton-like  conditional gradient method for solving  constrained systems of nonlinear equations.  The  local convergence  of the new method as well as results on its rate are established  by using a general majorant condition.  Two applications of such  condition  are provided: one is for functions whose the derivative satisfies  H\"{o}lder-like  condition and the other is for functions that satisfies a Smale condition, which includes a substantial class of analytic functions. Some preliminaries numerical experiments illustrating the applicability of the proposed method for medium and large  problems  are also presented.
\end{abstract}

\noindent {{\bf Keywords:} constrained nonlinear systems; inexact Newton-like method; conditional gradient method;  local convergence.}

\maketitle
\section{Introduction}

Let $\Omega \subset \mathbb{R}^n$ be an open set,  and $F:\Omega \to \mathbb{R}^n $ be a continuously differentiable nonlinear function. Consider the following constrained system of nonlinear equations
\begin{equation}\label{eq:p}
F(x)=0, \quad x\in C,
\end{equation}
where $C \subset \Omega$ is a nonempty convex compact set. Constrained nonlinear systems such as \eqref{eq:p} appear frequently in many important areas, for instance, engineering, chemistry and economy.
Due to this fact, the numerical solutions of problem \eqref{eq:p} have been the object of intense research in the last years and, consequently, different methods have been proposed in the literature.  Many of them are combinations of Newton methods for solving the unconstrained systems with some strategies  taking into account the constraint set.
Strategies based on  projections, trust region,  active set and gradient methods have  been used; see, e.g., \cite{morini1, bellavia2006, echebest2012, Mangasarian1, Kan2, sandra, cruz2014, mariniquasi, marinez2, Porcelli,wang2016, Zhang1, Zhu2005343}.

Recently, paper \cite{CondG} proposed  and  established a local convergence analysis of a Newton conditional gradient (Newton-CondG) method for solving \eqref{eq:p}. Basically, this method consists  of   computing   the Newton step  and after applying  a conditional gradient (CondG) procedure in order to get the Newton iterative back to the feasible set.  It is important to point out that the CondG method, also known as the Frank-Wolfe method,  is historically known as one of the earliest  first methods  for solving  convex constrained optimization problems,  see~\cite{Dunn1980,NAV:NAV3800030109}.
The  CondG  method and its variants require, at each iteration,  to minimizing a linear  function over the constraint set, which, in general, is significantly simpler than the projection step arising in many proximal-gradient methods. Indeed,  projection problems can be computationally hard for some high-dimensional problems. For instance, in large-scale semidefinite programming, each projection subproblem  of  the proximal-gradient  methods  requires to obtain the complete eigenvalue decomposition of a large matrix while each subproblem of the CondG methods   requires to compute the leading singular vector of such a matrix. The latter requirement  is less computationally expensive (see, for example, \cite{ICML2013_jaggi13} for more details). Moreover, depending on the
application, linear optimization oracles may provide solutions with specic characteristics leading
to important properties such as sparsity and low-rank; see, e.g., \cite{Freund2014,ICML2013_jaggi13} for a discussion on this
subject. Due to these advantages and others, the CondG method  have again received much attention, see for instances
\cite{Freund2014,nemiro2014,ICML2013_jaggi13,lan2015,Ronny2013}.

It  is well-know that  implementations of the Newton method for medium- or large-scale problems may be expensive and difficult due to the necessity to compute all the elements of the Jacobian matrix of $F$, as well as, the exact solution of a linear system for each iteration. For this reason, the main goal of  this work is to present  an  extension of the Newton-CondG method in which  the inexact Newton-like method is considered instead of  standard Newton method. In each step of this new method, the solution of the linear system and Jacobian matrix can be computed in approximate way; see the INL-CondG method in Section~ \ref{sec:condGmet} and comments following it. From the theoretical viewpoint, we present a  local convergence analysis of the proposed method under a majorant condition. The advantage of using  a general condition such as majorant condition in the analyses of Newton methods lies in the fact that it allows to study them    in a unified way. Thus, two applications of majorant  condition  are provided: one is for functions whose the derivative satisfies  H\"{o}lder-like  condition and the other is for functions that satisfies a Smale condition, which includes a substantial class of analytic functions.
From the applicability viewpoint, we report  some preliminaries numerical experiments of the proposed method for medium and large  problems  and compare its performance with the constrained dogleg method \cite{Bellavia2012}.



This paper is organized as follows. Subsection \ref{notation} presents some notation and  basic assumptions.
Section \ref{sec:condGmet} describes  the inexact Newton-like conditional gradient method and presents  its
convergence theorem  whose proof is postponed to Section \ref{sec:PMF}. Two applications of the main convergence theorem are also present in Section \ref{sec:condGmet}. Section \ref{NunEx} presents some preliminary numerical experiments of the proposed method. We conclude the paper with
some remarks.


\vspace{2mm}
\subsection{Notation and basic assumptions}\label{notation}

This subsection presents some notations and assumptions which will be used in the paper. We assume that $F:\Omega \to \mathbb{R}^n $ is a continuously differentiable nonlinear function,
where  $\Omega \subset \mathbb{R}^n$ is an open set containing a nonempty  convex compact set $C$.
The  Jacobian matrix  of $F$ at $x\in \Omega$ is denoted by $F'(x)$.
We also assume that there exists  $x_*\in C$  such that $F(x_*)=0$ and $F'(x_*)$ is nonsingular.
Let  the inner product and its associated norm in $\mathbb{R}^n$ be denoted by $\langle\cdot,\cdot\rangle$ and $\| \cdot \|$, respectively. The open ball centered at $a \in \mathbb{R}^n$ and radius $\delta>0$ is denoted  by $B(a,\delta)$.
For a given linear operator $T:\mathbb{R}^n\to \mathbb{R}^n$, we also use  $\| \cdot \|$ to denote its norm, which is defined by
$
\|T\|:=\sup\{ \|Tx\|,\;\|x\|\leq 1\}.
$
The condition number of a continuous linear operador $A:\mathbb{R}^n\to \mathbb{R}^n$ is denoted by cond$(A)$ and it is defined as
$\mbox{cond(A)}:= \|A^{-1}\| \|A\|$.

\section{The  method and its local convergence analysis}\label{sec:condGmet}
In this section, we  present the inexact Newton-like conditional gradient (INL-CondG) method for  solving~\eqref{eq:p} as well as  its local  convergence theorem whose proof is postponed to Section \ref{sec:PMF}. Our analysis is done by using a majorant condition, which allows to unify the convergence results for two classes of nonlinear functions, namely, one satisfying a H\"{o}lder-like condition and another one satisfying a Smale condition. The convergence results for these special cases are established  in this section.

The INL-CondG method is formally described as follows. \\\\
\fbox{
\begin{minipage}[h]{6.4 in}
{\bf  INL-CondG method}
\begin{description}
\item[ Step 0.] Let $x_0\in C$ and $\{\theta_j\}\subset [0,\infty)$ be given. Set $k=0$ and go to step 1.
\item[ Step 1.] If $F(x_k)=0$, then {\bf stop}; otherwise,  choose an invertible approximation $M_k$ of $F'(x_k)$ and compute  a triple $(s_k,r_k,y_k) \in \mathbb{R}^n\times \mathbb{R}^n\times \mathbb{R}^n$  such that
\begin{equation}\label{aa0}
M_k s_k=-F(x_k) + r_k,  \quad y_{k}=x_k+s_k.
\end{equation}
\item[ Step 2.]  Use CondG procedure  to obtain $x_{k+1}\in C$ as
\begin{equation*}
x_{k+1}=\mbox{CondG}(y_{k},x_k,\theta_k\|s_k\|^2).
\end{equation*}
\item[ Step 3.]  Set $k\gets k+1$, and go to step~1.
\end{description}
\noindent
{\bf end}
\\
\end{minipage}
}
\\

We now describe the subroutine CondG procedure.
\\[2mm]
\fbox{
\begin{minipage}[h]{6.4 in}
{ {\bf CondG procedure} $z=\mbox{CondG}(y,x,\varepsilon)$}
\begin{description}
\item[ P0.]   Set $z_1=x$ and  $t=1$.
\item[ P1.] Use the LO oracle to compute an optimal solution $u_t $ of
$$
g_{t}^*=\min_{u \in  C}\{\langle  z_t-y,u-z_t \rangle\}.
$$
\item[ P2.] If $ g^*_{t}\geq -\varepsilon $,  set $z=z_t$ and {\bf stop} the procedure; otherwise, compute $\alpha_t \in \, (0,1]$ and $z_{t+1}$ as

$$
{\alpha}_t: =\min\left\{1, \frac{-g^*_{t}}{\|u_t-z_t\|^2}  \right\}, \quad  z_{t+1}=z_t+ \alpha_t(u_t-z_t).
$$
\item[ P3.] Set $t\gets t+1$, and go to {\bf P1}.
\end{description}
{\bf end procedure}
\\
\end{minipage}
}
\\
\\[2mm]
{\bf Remarks.} 1) In our local analysis of the INL-CondG method, the invertible approximation $M_k$ of $F'(x_k)$ and the residual $r_k$   will satisfy  classic conditions (see  \eqref{con:qn} and \eqref{con:vk}). The inexact Newton-like method with these conditions on $M_k$ and $r_k$ was proposed  in \cite{morini} and, subsequently,  also studied in, for example,  \cite{chen, MAX1}.
2) The INL-CondG method can be seen as a class of  methods, depending on the choices of  the invertible approximation $M_k$ of $F'(x_k)$  and residual $r_k$. Indeed, by  letting  $M_k =F'(x_k)$ and $r_k=0$, the INL-CondG method corresponds to Newton conditional gradient method which was studied in \cite{CondG}. Another classical choice of $M_k$ would be
$M_k = F'(x_0)$.   We also emphasize that  there are some approach to built $M_k$ that do not involve derivatives, see, for example, \cite{Cruz2006, Luk1998, mariniquasi}.
3) Due to existence of constraint set  $C$,  the point  $y_k$ in Step~1 may be infeasible and hence
the INL-CondG method  use an inexact conditional gradient method in order to obtain the new iterate $x_{k + 1}$ in C.
4) The CondG procedure requires an oracle which is assumed to be able to minimize linear functions over the constraint set.
5) Finally, in the CondG procedure, if $ g^*_{t}\geq -\varepsilon$, then $z_t \in C$ and stop. However, if $ g^*_{t} < -\varepsilon \leq 0$, the procedure continues. In this case, the stepsize $\alpha_t$ is well defined and belong to (0,1].\\

In the following, we state our main local convergence result for the INL-CondG method whose proof is given in Section~\ref{sec:PMF}.
\begin{theorem}\label{th:nt}
Let $x_*\in C$, $R > 0$ and
$
\kappa:=\kappa(\Omega,R)=\sup \left\{ t\in [0, R): B(x_*, t)\subset \Omega \right\}.
$
Suppose that there exist a $f: [0,R) \to \mathbb{R}$ continuously differentiable function such that
\begin{equation}\label{Hyp:MH}
\left\|F'(x_*)^{-1}\left[F'(x)-F'(x_*+\tau(x-x_*))\right]\right\| \leq    f'\left(\|x-x_*\|\right)- f'\left(\tau\|x-x_*\|\right),
\end{equation}
for all $\tau \in [0,1]$ and $x\in B(x_*, \kappa)$, where
\begin{itemize}
  \item[{\bf h1.}]  $f(0)=0$ and $f'(0)=-1$;
  \item[{\bf  h2.}]  $f'$ is strictly increasing.
\end{itemize}
Take the constants $ \vartheta $, $\omega_1$, $\omega_2$ and $\lambda$ such that
\[
0\leq \vartheta < 1,\quad 0\leq \omega_2 < \omega_1, \quad \omega_1 \vartheta + \omega_2 < 1,\quad
\lambda \in \left[0,{(1 - \omega_2 - \omega_1 \vartheta)}/{(\omega_1(1+\vartheta))}\right).
\]
Let the scalars $\nu$,  $\rho$ and $\sigma$ defined as
\begin{equation*}\label{nu:lambda}
\nu:=\sup\{t\in [0, R): f'(t)<0\},
\end{equation*}
\begin{equation*}\label{rho}
\rho:=\sup\left\{\delta \in(0, \nu):\omega_1(1+\vartheta)(1+\lambda)\left(\frac{f(t)}{t f'(t)} -1\right)+\omega_1[(1+\vartheta)\lambda + \vartheta] +\omega_2<1, \; t\in(0, \delta)\right\},
\end{equation*}
\begin{equation}\label{def:sigma}
 \sigma :=\min\{\kappa,\rho\}.
\end{equation}
Let $\{\theta_k\}$ and $x_0$ be given in step 0 of the INL-CondG  method and let also  $\{M_k\}$ and $\{(x_k,r_k)\}$ be generated by  the INL-CondG method. Assume that the invertible approximation $M_k$ of $F'(x_k)$  satisfies
\begin{equation}\label{con:qn}
\|{M_k}^{-1}F'(x_k)\| \leq \omega_{1}, \qquad
\|{M_k}^{-1}F'(x_k)-I\| \leq \omega_{2},  \quad k=0,1,\ldots,
\end{equation}
and the residual  $r_k$ is such that
\begin{equation}\label{con:vk}
\|P_{k}r_{k}\|\leq \eta_{k}\|P_{k}F(x_{k})\|, \qquad
0\leq\eta_{k}\, \mbox{cond}(P_{k}F'(x_{k}))\leq \vartheta, \quad k=0,1,\ldots,
\end{equation}
where $\{P_k\}$ is a sequence of invertible matrix (preconditioners for the linear system in \eqref{aa0}) and $\{\eta_k\}$ is a forcing sequence.
If  $x_0\in C\cap B(x_*, \sigma)\backslash \{x_*\}$ and $\{\theta_k\} \subset [0,\lambda^2/2]$, then $\{x_k\}$  is contained in $B(x_*, \sigma)\cap C$, converges to $x_*$ and there holds
\begin{equation}\label{eq:xklinearconv}
\|x_{k+1}-x_*\| < \|x_{k}-x_*\|, \qquad\limsup_{k \to \infty} \; \frac{\|x_{k+1}-x_*\|}{\|x_k-x_*\|}\leq \omega_1[(1+\vartheta)\sqrt{2\tilde\theta}+\vartheta] + \omega_2,
\end{equation}
where $\tilde \theta=\limsup_{k\to \infty}\theta_k$. Additionally, given $0< p\leq1$, assume that the following condition holds:
\begin{itemize}
\item[{\bf  h3.}] the function  $(0,\, \nu) \ni t \mapsto [ f(t)\big{/} f'(t) - t]/t^{p+1}$ is  strictly increasing.
\end{itemize}
Then, for all integer $k\geq 0$, we have
\begin{multline}\label{eq:xkispconvergent}
\|x_{k+1}-x_*\| \leq
\omega_1(1+\vartheta)(1+\lambda){\left( \frac{f(\|x_0 - x_*\|)}{ f'(\|x_0 - x_*\|)}-\|x_0 - x_*\| \right)}\left(\frac{\|x_k-x_*\|}{{\|x_0 - x_*\|}}\right)^{p+1}\\+(\omega_1[(1+\vartheta)\lambda+\vartheta]+\omega_2)\,\|x_k-x_*\|.
\end{multline}
\end{theorem}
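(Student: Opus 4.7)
The plan is to split the one-step analysis into three ingredients---majorant-condition estimates for the exact Newton residual, a bound for the inexact predictor $y_k$, and an approximate-projection estimate for the CondG output $x_{k+1}$---and then close the argument by induction, invoking \textbf{h3} only at the end for the sharpened rate.

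First, I would exploit \eqref{Hyp:MH} with $\tau=0$: combined with \textbf{h1}--\textbf{h2} this gives $\|F'(x_*)^{-1}F'(x) - I\| \leq f'(\|x-x_*\|) + 1 < 1$ on $B(x_*,\nu)\setminus\{x_*\}$, so by the Banach lemma $F'(x)$ is invertible there with $\|F'(x)^{-1}F'(x_*)\| \leq -1/f'(\|x-x_*\|)$. Writing $F(x) = F'(x)(x-x_*) + \int_0^1[F'(x_*+\tau(x-x_*)) - F'(x)](x-x_*)\,d\tau$ and combining with the resolvent bound produces the classical Wang-type estimate
\begin{equation*}
\|e(x)\| := \|x - x_* - F'(x)^{-1}F(x)\| \leq \frac{f(\|x-x_*\|)}{f'(\|x-x_*\|)} - \|x-x_*\|,
\end{equation*}
valid for $x\in B(x_*,\nu)\setminus\{x_*\}$. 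Assumption \textbf{h3} further yields the scaled comparison $f(t)/f'(t)-t \leq [f(\|x_0-x_*\|)/f'(\|x_0-x_*\|) - \|x_0-x_*\|](t/\|x_0-x_*\|)^{p+1}$, which will drive the $(p{+}1)$-power term in \eqref{eq:xkispconvergent}.

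Second, I would work with the algebraic identity
\begin{equation*}
y_k - x_* = (I - M_k^{-1}F'(x_k))(x_k - x_*) + M_k^{-1}F'(x_k)\,e(x_k) + M_k^{-1}r_k,
\end{equation*}
controlling the first two terms via \eqref{con:qn}. For the residual, I would factor $M_k^{-1}r_k = [M_k^{-1}F'(x_k)]\cdot[(P_k F'(x_k))^{-1}]\cdot[P_k r_k]$, apply \eqref{con:vk} together with $\|P_k F(x_k)\| \leq \|P_k F'(x_k)\|\,\|F'(x_k)^{-1}F(x_k)\|$ to peel off the preconditioned condition number, and split $\|F'(x_k)^{-1}F(x_k)\| \leq \|x_k-x_*\| + \|e(x_k)\|$. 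This delivers the two key estimates
\begin{align*}
\|y_k - x_*\| &\leq (\omega_2 + \omega_1\vartheta)\|x_k-x_*\| + \omega_1(1+\vartheta)\|e(x_k)\|,\\
\|s_k\| &\leq \omega_1(1+\vartheta)(\|x_k-x_*\| + \|e(x_k)\|).
\end{align*}
Third, the CondG stopping test $g_t^*\geq -\theta_k\|s_k\|^2$ gives $\langle x_{k+1}-y_k, u - x_{k+1}\rangle \geq -\theta_k\|s_k\|^2$ for every $u\in C$; choosing $u=x_*\in C$ and expanding $\|x_{k+1}-x_*\|^2 - \|y_k-x_*\|^2 = -\|x_{k+1}-y_k\|^2 - 2\langle x_{k+1}-y_k, x_*-x_{k+1}\rangle$ yields $\|x_{k+1}-x_*\| \leq \|y_k-x_*\| + \sqrt{2\theta_k}\,\|s_k\|$. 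With $\sqrt{2\theta_k}\leq\lambda$, combining the previous bounds produces the one-step recursion
\begin{equation*}
\|x_{k+1}-x_*\| \leq [\omega_2 + \omega_1\vartheta + \omega_1(1+\vartheta)\lambda]\|x_k-x_*\| + \omega_1(1+\vartheta)(1+\lambda)\|e(x_k)\|,
\end{equation*}
which, after substituting the Wang-type bound for $\|e(x_k)\|$ and (for the rate) invoking \textbf{h3}, reduces to \eqref{eq:xkispconvergent}; feasibility $x_{k+1}\in C$ is automatic since CondG iterates are convex combinations of points of $C$.

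Finally, the definition of $\rho$ makes the coefficient of the above recursion strictly less than $1$ on $B(x_*,\sigma)\setminus\{x_*\}$, so induction yields $\|x_{k+1}-x_*\| < \|x_k-x_*\|$, keeps $\{x_k\}\subset B(x_*,\sigma)\cap C$, and forces $x_k\to x_*$; passing to the limit in the recursion with the tighter $\sqrt{2\theta_k}$ in place of $\lambda$ and using $f(t)/(tf'(t))\to 1$ as $t\downarrow 0$ (a consequence of \textbf{h1}) produces the $\limsup$ bound in \eqref{eq:xklinearconv}. The main technical obstacle I expect is the clean factorization of $M_k^{-1}r_k$ through $P_k$: only by routing it through $\|F'(x_k)^{-1}F(x_k)\| \leq \|x_k-x_*\|+\|e(x_k)\|$ do the factors of $\omega_1\vartheta$ land on \emph{both} the linear and the superlinear pieces with matching coefficients $\omega_1(1+\vartheta)\lambda$ and $\omega_1(1+\vartheta)(1+\lambda)$ as required by \eqref{eq:xkispconvergent}; any cruder estimate would inflate the linear coefficient and spoil the contraction condition defining $\rho$.
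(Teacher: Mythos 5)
Your proposal is correct and follows essentially the same route as the paper: the same decomposition of $y_k-x_*$ into the linearization error, the $(I-M_k^{-1}F'(x_k))$ term and the residual, the same factorization of $M_k^{-1}r_k$ through $P_k$ and $\|F'(x_k)^{-1}F(x_k)\|$, and the same use of $|n_f(t)|=f(t)/f'(t)-t$ together with the definition of $\rho$ and \textbf{h3}; the only addition is that you prove the CondG perturbation bound $\|x_{k+1}-x_*\|\le\|y_k-x_*\|+\sqrt{2\theta_k}\|s_k\|$ inline from the stopping test, whereas the paper cites it from \cite{CondG}. The one step you gloss over is that strict decrease of $\|x_k-x_*\|$ does not by itself force $x_k\to x_*$; the paper closes this by passing to the limit in the one-step recursion and invoking the continuity of $n_f$ together with \eqref{eq:001} to conclude that the limit of $\|x_k-x_*\|$ is zero.
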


\begin{remark}
As mentioned before, the INL-CondG method as be viewed as a class of methods. Hence, the above Theorem  implies, in particular, the convergence of some new methods, which are named below. We obtain, from Theorem \ref{th:HV}, the convergence for the inexact modified Newton conditional gradient method if $M_k = F'(x_0)$, the inexact Newton conditional gradient method if  $\omega_1 = 1$ and $\omega_2 = 0$  (i.e., $M_k = F'(x_k)$), and the Newton-like conditional gradient method if $\vartheta = 0$ (in this case $\eta_k \equiv 0$ and $r_k \equiv 0$). We also mention that  when $\omega_1 = 1$, $\omega_2 = 0$ and $\vartheta = 0$ (i.e., $M_k =F'(x_k)$, $\eta_k \equiv 0$ and $r_k \equiv 0$),  Theorem \ref{th:HV} is similar to  Theorem~6 in \cite{CondG}.
\end{remark}

\begin{remark}
It is worth pointing out that when $\tilde \theta=0$, then \eqref{eq:xklinearconv} implies that the sequence $\{x_k\}$ converge linearly to $x_*$. Additionally,  if  $\omega_1 = 1$, $\omega_2 = 0$ and $\vartheta = 0$, then $\{x_k\}$ converge superlinear to $x_*$.
On the other hand, if $f'$ is convex, i.e., {\bf h3} holds with $p=1$,   it follows from \eqref{eq:xkispconvergent}, the first inequality in \eqref{eq:xklinearconv}, definition \eqref{def:sigma} and the fact that $x_0\in C\cap B(x_*, \sigma)\backslash \{x_*\}$ that $\{x_k\}$ converge linearly to $x_*$. Additionally, if $\omega_1 = 1$, $\omega_2 = \vartheta = \lambda =0$, it follows from \eqref{eq:xkispconvergent}, that $\{x_k\}$ converge quadratically to $x_*$.
\end{remark}

We now specialize Theorem \ref{th:nt}  for two important classes of functions. In the first one, $F'$ satisfies a H\"{o}lder-like condition \cite{goncalves2013, goncalves2016, huangy2004}, and in the second one,  $F$ is an analytic function  satisfying a Smale condition \cite{Shen201029,S86}.

\begin{corollary}\label{th:HV}
Let $\kappa=\kappa(\Omega,\infty)$ as defined in  Theorem~\ref{th:nt}. Assume that there exist a constant $K>0$ and $ 0< p \leq 1$ such that
\begin{equation}\label{holder_cond}
\left\|F'(x_*)^{-1}[F'(x)-F'(x_*+\tau(x-x_*))]\right\|\leq  K(1-\tau^p) \|x-x_*\|^p, \quad \tau \in [0,1], \quad x\in B(x_*, \kappa).
\end{equation}
Take $0\leq \vartheta < 1$, $0\leq \omega_2 < \omega_1$ such that $\omega_1 \vartheta + \omega_2 < 1$
and $\lambda\in \left[0,{(1 - \omega_2 - \omega_1 \vartheta)}/{(\omega_1(1+\vartheta))}\right)$.
Let $$\bar \sigma:=\min \left\{\kappa, \left[\frac{(1-\omega_1[(1+\vartheta)\lambda +\vartheta]-\omega_2)(p+1)}{K(p-\omega_1[(1+\vartheta)\lambda +\vartheta-p]-\omega_2(p+1)+1)}\right]^{1/p}\right\}.$$
Let $\{\theta_k\}$ and $x_0$ be given in step 0 of the INL-CondG  method and let also $\{M_k\}$ and $\{(x_k, r_k)\}$ be generated by the INL-CondG method. Assume that the invertible approximation $M_k$ of $F'(x_k)$ satisfies
\begin{equation*}
\|{M_k}^{-1}F'(x_k)\| \leq \omega_{1}, \qquad
\|{M_k}^{-1}F'(x_k)-I\| \leq \omega_{2},  \quad k=0,1,\ldots,
\end{equation*}
and the residual  $r_k$ is such that
\begin{equation*}
\|P_{k}r_{k}\|\leq \eta_{k}\|P_{k}F(x_{k})\|, \qquad
0\leq\eta_{k}\, \mbox{cond}(P_{k}F'(x_{k}))\leq \vartheta, \quad k=0,1,\ldots.
\end{equation*}
where $\{P_k\}$ is a sequence of invertible matrix (preconditioners for the linear system in \eqref{aa0}) and $\{\eta_k\}$ is a forcing sequence.
If $x_0\in C\cap B(x_*, \sigma)\backslash \{x_*\}$ and $\{\theta_k\} \subset [0,\lambda^2/2]$, then $\{x_k\}$  is contained in $B(x_*, \sigma)\cap C$, converges to $x_*$ and there hold
\begin{equation*}\label{eq:xk_linearconv}
\|x_{k+1}-x_*\| < \|x_{k}-x_*\|, \qquad \limsup_{k \to \infty} \; \frac{\|x_{k+1}-x_*\|}{\|x_k-x_*\|}\leq \omega_1[(1+\vartheta)\sqrt{2\tilde\theta}+\vartheta] + \omega_2,
\end{equation*}
\begin{equation*}
\|x_{k+1}-x_*\| \leq
\frac{\omega_1(1+\vartheta)(1+\lambda) p K}{(p+1)[1- K\,\|x_0-x_*\|^{p}]}\|x_k-x_*\|^{p+1}+(\omega_1[(1+\vartheta)\lambda+\vartheta]+\omega_2)\|x_k-x_*\|,
\quad   k\geq 0,
\end{equation*}
where $\tilde \theta=\limsup_{k\to \infty}\theta_k$.
\end{corollary}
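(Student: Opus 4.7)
The plan is to show that Corollary~\ref{th:HV} is essentially a direct specialization of Theorem~\ref{th:nt} to the majorant function
\[
f(t) := \frac{K}{p+1}\, t^{p+1} - t, \qquad t \in [0,\infty),
\]
so that the corollary reduces to verifying the hypotheses of the theorem and computing the relevant constants in closed form.

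First I would check the majorant inequality \eqref{Hyp:MH}. With the above choice of $f$ one has $f'(t) = Kt^p - 1$ and hence
\[
f'(t) - f'(\tau t) = K\bigl(1 - \tau^p\bigr)\, t^p,
\]
so the H\"older-like bound \eqref{holder_cond} gives exactly \eqref{Hyp:MH} on $B(x_*,\kappa)$. Hypotheses \textbf{h1} and \textbf{h2} are immediate: $f(0)=0$, $f'(0)=-1$, and $f'$ is strictly increasing because $t \mapsto Kt^p$ is strictly increasing for $p>0$, $K>0$. For \textbf{h3}, a short computation gives
\[
\frac{f(t)}{t\, f'(t)} - 1 \;=\; \frac{p\,K t^p/(p+1)}{1 - Kt^p}, \qquad
\frac{1}{t^{p+1}}\!\left(\frac{f(t)}{f'(t)} - t\right) \;=\; \frac{Kp/(p+1)}{1 - Kt^p},
\]
which is visibly strictly increasing on $(0,\nu)$; here $\nu = \sup\{t: f'(t)<0\} = K^{-1/p}$.

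Next I would identify $\rho$ with the explicit bound appearing in the definition of $\bar\sigma$. Using the expression for $f(t)/(tf'(t)) - 1$ just obtained, the defining inequality of $\rho$ in Theorem~\ref{th:nt} becomes
\[
\omega_1(1+\vartheta)(1+\lambda)\,\frac{p K t^p/(p+1)}{1 - Kt^p} \;+\; \omega_1[(1+\vartheta)\lambda + \vartheta] + \omega_2 \;<\; 1.
\]
Solving this for $t^p$ is a routine algebraic manipulation: one clears the positive denominator $1-Kt^p$, collects the terms involving $Kt^p$, and obtains
\[
t^p \;<\; \frac{\bigl(1 - \omega_1[(1+\vartheta)\lambda + \vartheta] - \omega_2\bigr)(p+1)}{K\bigl(p - \omega_1[(1+\vartheta)\lambda + \vartheta - p] - \omega_2(p+1) + 1\bigr)}.
\]
Thus $\rho$ equals the quantity inside the $\min$ in the definition of $\bar\sigma$, and so $\sigma = \min\{\kappa,\rho\} = \bar\sigma$.

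Finally, with all hypotheses of Theorem~\ref{th:nt} verified and $\sigma = \bar\sigma$, the first two conclusions of Corollary~\ref{th:HV} (stability of the ball $B(x_*,\bar\sigma)\cap C$, convergence, and the linear-convergence estimate \eqref{eq:xklinearconv}) are inherited directly. The sharp estimate
\[
\|x_{k+1}-x_*\| \le \frac{\omega_1(1+\vartheta)(1+\lambda)\,pK}{(p+1)\bigl[1-K\|x_0-x_*\|^p\bigr]}\|x_k-x_*\|^{p+1} + \bigl(\omega_1[(1+\vartheta)\lambda+\vartheta]+\omega_2\bigr)\|x_k-x_*\|
\]
follows by substituting
\[
\frac{f(\|x_0-x_*\|)}{f'(\|x_0-x_*\|)} - \|x_0-x_*\| \;=\; \frac{pK\|x_0-x_*\|^{p+1}/(p+1)}{1 - K\|x_0-x_*\|^p}
\]
into the general bound \eqref{eq:xkispconvergent}. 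The only nontrivial step is the algebraic identification of $\rho$ with the closed-form expression in $\bar\sigma$; everything else is bookkeeping with the explicit $f$.
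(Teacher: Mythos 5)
Your proof is correct and follows exactly the paper's own route: instantiate Theorem~\ref{th:nt} with the majorant function $f(t)=Kt^{p+1}/(p+1)-t$, verify \eqref{Hyp:MH} and \textbf{h1}--\textbf{h3}, and identify $\nu=K^{-1/p}$ and $\rho$ with the closed-form radius so that $\bar\sigma=\sigma$. The only difference is that you carry out explicitly the algebra the paper dismisses as ``immediate'' and ``easily seen,'' and your computations check out.
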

\begin{proof}
It is immediate to prove that $F$, $x_*$ and $f:[0, \infty)\to \mathbb{R}$ defined by
$
f(t)=Kt^{p+1}/(p+1)-t
$
satisfy the inequality \eqref{Hyp:MH}, conditions {\bf h1}, {\bf h2} and  {\bf h3} in Theorem \ref{th:nt}.  Moreover,  in this case, it is easily seen that $\nu$ and $\rho$, as defined in Theorem \ref{th:nt}, satisfy
$$
\rho=\left[\frac{(1-\omega_1[(1+\vartheta)\lambda +\vartheta]-\omega_2)(p+1)}{K(p-\omega_1[(1+\vartheta)\lambda +\vartheta-p]-\omega_2(p+1)+1)}\right]^{1/p}< \nu = \left[\frac{1}{K} \right]^{1/p},
$$
as a consequence,  $\bar \sigma = \min \{\kappa,\; \rho\}=\sigma$ (see \eqref{def:sigma}). Therefore, the statements of the corollary follow directly from Theorem~\ref{th:nt}.
\end{proof}
\noindent
{\bf Remarks.} 1) If a function $F$ is such that its derivative is L-Lipschitz continuous, i.e., $\|F'(x)-F'(y)\|\leq  L \|x-y\|$, for all $  x, y \in B(x_*, \kappa)$ where $L>0$,
then it also satisfies condition \eqref{holder_cond} with $p=1$ and $K=L\|F'(x_*)^{-1}\|$.
Hence, we obtain the convergence of the INL-CondG method under a Lipschitz condition.
In this case, $\{x_k\}$ converges linearly to $x_*$, and if additionally  $\omega_1 = 1$ and $\omega_2 = \vartheta =\lambda= 0$,
it converges   to $x_*$ quadratically.
2) It is worth mentioning that if $\omega_1 = 1$ and $\omega_2 = \vartheta = 0$ in the previous corollary,  we obtain the convergence of the Newton-CondG method under a H\"{o}lder-like condition, as obtained in \cite[Theorem~7]{CondG}.\\

We next  specialize Theorem~\ref{th:nt}  for the class of analytic functions satisfying a Smale condition.
\begin{corollary}
Let $\kappa=\kappa(\Omega,1/\gamma)$ as defined in Theorem~\ref{th:nt}.
Assume that $F:{\Omega}\to \mathbb{R}^{n}$ is an analytic function and
\begin{equation*} \label{eq:SmaleCond}
\gamma := \sup _{ n > 1 }\left\| \frac
{F'(x_*)^{-1}F^{(n)}(x_*)}{n !}\right\|^{1/(n-1)}<+\infty.
\end{equation*}
Take $0\leq \vartheta < 1$, $0\leq \omega_2 < \omega_1$ such that $\omega_1 \vartheta + \omega_2 < 1$
and $\lambda\in \left[0,{(1 - \omega_2 - \omega_1 \vartheta)}/{(\omega_1(1+\vartheta))}\right)$. Let $a :=\omega_1(1+\vartheta)(1-3\lambda)+4(1-\omega_1\vartheta-\omega_2)$,  $b := 1-\omega_1[(1+\vartheta)\lambda+\vartheta]-\omega_2$ and
$$
\bar \sigma :=\min \left\{\kappa,\frac{a-\sqrt{a^2-8b^2}}{4\gamma b}\right\}.
$$
Let $\{\theta_k\}$ and $x_0$ be given in step 0 of the INL-CondG  method and let also $\{M_k\}$ and $\{(x_k, r_k)\}$ be generated by the INL-CondG method. Assume that the invertible approximation $M_k$ of $F'(x_k)$ satisfies
\begin{equation*}
\|{M_k}^{-1}F'(x_k)\| \leq \omega_{1}, \qquad
\|{M_k}^{-1}F'(x_k)-I\| \leq \omega_{2},  \qquad
\; k=0,1,\ldots,
\end{equation*}
and the residual  $r_k$ is such that
\begin{equation*}
\|P_{k}r_{k}\|\leq \eta_{k}\|P_{k}F(x_{k})\|, \qquad
0\leq\eta_{k}\, \mbox{cond}(P_{k}F'(x_{k}))\leq \vartheta, \qquad k=0,1,\ldots.
\end{equation*}
where $\{P_k\}$ is a sequence of invertible matrix (preconditioners for the linear system in \eqref{aa0}) and $\{\eta_k\}$ is a forcing sequence.
If $x_0\in C\cap B(x_*, \sigma)\backslash \{x_*\}$ and $\{\theta_k\} \subset [0,\lambda^2/2]$, then $\{x_k\}$  is contained in $B(x_*, \sigma)\cap C$, converges to $x_*$ and there holds
\begin{equation*}
\|x_{k+1}-x_*\| < \|x_{k}-x_*\|, \qquad \limsup_{k \to \infty} \; \frac{\|x_{k+1}-x_*\|}{\|x_k-x_*\|}\leq \omega_1[(1+\vartheta)\sqrt{2\tilde\theta}+\vartheta] + \omega_2,
\end{equation*}
\begin{equation*}
\|x_{k+1}-x_*\| \leq
\frac{\omega_1(1+\vartheta)(1 + \lambda)\gamma}{2(1- \gamma \|x_0 - x_*\|)^{2}-1}\|x_k-x_*\|^{2}+(\omega_1[(1+\vartheta)\lambda + \vartheta]+\omega_2){\|x_k-x_*\|},
\quad  k\geq 0,
\end{equation*}
where $\tilde \theta=\limsup_{k\to \infty}\theta_k$.
\end{corollary}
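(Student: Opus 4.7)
The approach is to reduce the statement to Theorem~\ref{th:nt} by exhibiting the classical Smale majorant
$$f(t) := \frac{t}{1-\gamma t} - 2t, \qquad t \in [0, 1/\gamma),$$
so that $R = 1/\gamma$ in the notation of Theorem~\ref{th:nt}. A direct differentiation gives $f(0)=0$ and $f'(t) = (1-\gamma t)^{-2} - 2$, so $f'(0)=-1$ and $f'$ is strictly increasing on $[0,1/\gamma)$; this establishes {\bf h1} and {\bf h2}. For {\bf h3} with $p=1$, the substitution $u = 1-\gamma t$ yields, after a short calculation,
$$\frac{f(t)}{f'(t)} - t \;=\; \frac{\gamma t^2}{2(1-\gamma t)^2 - 1},$$
from which the strict monotonicity of $t \mapsto (f(t)/f'(t)-t)/t^2$ on $(0,\nu)$ is immediate.

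The main technical step is to verify that $F$, $x_*$ and the above $f$ satisfy the majorant inequality \eqref{Hyp:MH}. Using analyticity of $F$, one expands at $x_*$,
$$F'(x_*)^{-1}\bigl[F'(x)-F'(x_*+\tau(x-x_*))\bigr] \;=\; \sum_{n\geq 2}\frac{F'(x_*)^{-1}F^{(n)}(x_*)}{(n-1)!}\bigl(1-\tau^{n-1}\bigr)(x-x_*)^{n-1},$$
bounds each term via the Smale estimate $\|F'(x_*)^{-1}F^{(n)}(x_*)/n!\|\leq \gamma^{n-1}$, and recognizes the resulting majorant as
$$\sum_{n\geq 2} n\gamma^{n-1}\bigl(1-\tau^{n-1}\bigr)\|x-x_*\|^{n-1} \;=\; f'(\|x-x_*\|) - f'(\tau\|x-x_*\|),$$
using the geometric-series identity $\sum_{n\geq 1} n\gamma^{n-1}t^{n-1} = (1-\gamma t)^{-2}$. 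This step, while standard in the Smale-majorant literature, is the most laborious computation and the main obstacle to a clean write-up.

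It remains to identify $\rho$ for this $f$. Substituting the formula for $f(t)/(tf'(t))-1 = \gamma t/[2(1-\gamma t)^2-1]$ into the inequality defining $\rho$ and clearing denominators (the factor $2(1-\gamma t)^2 - 1$ is positive on $(0,\nu)$), the boundary equation becomes, upon setting $s := \gamma t$,
$$\omega_1(1+\vartheta)(1+\lambda)\,s \;=\; b\bigl[\,2(1-s)^2 - 1\,\bigr], \qquad b := 1-\omega_1[(1+\vartheta)\lambda+\vartheta]-\omega_2,$$
i.e. $2bs^2 - as + b = 0$, after one checks the algebraic identity $4b + \omega_1(1+\vartheta)(1+\lambda) = a$ with $a$ as defined in the statement. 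The relevant (smaller, positive) root is $s = (a-\sqrt{a^2-8b^2})/(4b)$, and hence $\rho = (a-\sqrt{a^2-8b^2})/(4\gamma b)$; combined with $\kappa = \kappa(\Omega,1/\gamma)$ this gives $\sigma = \bar\sigma$.

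The first three conclusions now follow directly from Theorem~\ref{th:nt}. The explicit quadratic-type estimate follows by plugging the identity $f(t)/f'(t) - t = \gamma t^2/[2(1-\gamma t)^2-1]$ with $t = \|x_0-x_*\|$ into the generic bound \eqref{eq:xkispconvergent}: the prefactor of $\|x_k-x_*\|^2$ becomes precisely $\omega_1(1+\vartheta)(1+\lambda)\,\gamma/[2(1-\gamma\|x_0-x_*\|)^{2}-1]$, matching the stated estimate.
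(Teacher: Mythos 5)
Your proof is correct and follows the same route as the paper: reduce to Theorem~\ref{th:nt} with the Smale majorant $f(t)=t/(1-\gamma t)-2t$, identify $\rho=(a-\sqrt{a^2-8b^2})/(4\gamma b)$ from the quadratic $2bs^2-as+b=0$, and read off the quadratic-type estimate from \eqref{eq:xkispconvergent} via the identity $f(t)/f'(t)-t=\gamma t^2/[2(1-\gamma t)^2-1]$. The only difference is that you carry out explicitly the verification of \eqref{Hyp:MH} (Taylor expansion plus the Smale bound) and of {\bf h3} (direct computation rather than convexity of $f'$), whereas the paper delegates both to citations of \cite[Theorem~15]{MAX1} and \cite[Proposition~7]{MAX1}; your computations are accurate.
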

\begin{proof}
Under the assumptions of the corollary, the  real function $f:[0,1/\gamma) \to \mathbb{R}$,
defined by $ f(t)={t}/{(1-\gamma t)}-2t$, is a majorant function for $F$ on $B(x_*, 1/\gamma)$;
see for instance, \cite[Theorem~15]{MAX1}. Since $f'$ is convex, it  satisfies   {\bf h3} in Theorem \ref{th:nt} with $p = 1$; see  \cite[Proposition~7]{MAX1}.
Moreover,  in this case, it is easily seen that $\nu$ and $\rho$, as defined in Theorem~\ref{th:nt},  satisfy
$$\rho=\frac{ a-\sqrt{a^2-8b^2}}{4\gamma b}, \qquad \nu=\frac{\sqrt{2}-1}{\sqrt{2}\gamma}, \qquad \rho<\nu< \frac{1}{\gamma},
$$
and, as a consequence,  $\bar \sigma=\min \{\kappa,\; \rho\}= \sigma$  (see \eqref{def:sigma}). Therefore, the statements of the corollary follow from Theorem~\ref{th:nt}.
\end{proof}
\noindent
{\bf Remark.}  The convergence of the Newton-CondG method under a Smale condition, as obtained  in \cite[Theorem~8]{CondG}, follows from the previous corollary with  $\omega_1 = 1$ and $\omega_2 = \vartheta = 0$.

\section{Proof of Theorem~\ref{th:nt}} \label{sec:PMF}

The main goal of this section is to prove  Theorem 1. First,  we  establish some properties involving the majorant function
and its Newton iteration map. Then, some properties of the CondG procedure are discussed. Finally, the desired proof is presented.


From now on, we assume that all the assumptions of Theorem \ref{th:nt} hold, with the exception of \textbf{h3},
which will be considered to hold only when explicitly stated.


\begin{proposition}  \label{pr:incr1}
The constant $\nu$ is positive and $f'(t)<0 $ for all  $t\in [0,\nu).$ As a consequence, the  Newton iteration map $n_f:[0,\, \nu)\to \mathbb{R}$ defined by
\begin{equation}\label{eq:def.nf}
n_f(t)= t-f(t)/f'(t)
\end{equation}
is well defined and satisfies
\begin{equation}\label{a23}
n_f(t)<0\;   \mbox{ for all }\; t \in (0,\,\nu) \; \mbox{ and } \; \lim_{t\downarrow 0}\frac{|n_f(t)|}t=0.
\end{equation}
Moreover, the constants $\rho$ and $\sigma$ are positive and
\begin{equation}\label{eq:001}
0< \omega_1(1+\vartheta)(1+\lambda)|n_f(t)|+ (\omega_1[(1+\vartheta)\lambda + \vartheta]+\omega_2) t < t,\quad  t\in (0, \, \rho).
\end{equation}
\end{proposition}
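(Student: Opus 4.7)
The plan is to handle the four conclusions in the natural order, with the Newton iteration map $n_f$ as the central object. The only ingredients I need are the normalization $f'(0)=-1$, continuity of $f'$, and its strict monotonicity, i.e.\ hypotheses \textbf{h1} and \textbf{h2}.

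First I would establish that $\nu>0$ and $f'<0$ on $[0,\nu)$. Since $f'(0)=-1<0$ and $f'$ is continuous, $f'$ remains negative in a neighbourhood of $0$, so $\nu>0$. Strict monotonicity then forces $f'(t)<0$ on all of $[0,\nu)$: if $f'$ reached $0$ at some $t_0<\nu$, it would be nonnegative on $[t_0,\nu)$, contradicting the supremum definition of $\nu$. In particular $n_f(t)=t-f(t)/f'(t)$ is well defined on $[0,\nu)$.

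Next I would prove the two properties in \eqref{a23}. The core computation is that $f(0)=0$ together with strict monotonicity of $f'$ yields the strict inequality $f(t)=\int_0^t f'(s)\,ds < tf'(t)$ for $t\in(0,\nu)$; dividing by the negative quantity $f'(t)$ gives $f(t)/f'(t)>t$, i.e.\ $n_f(t)<0$. For the limit, I would rewrite
\[
\frac{|n_f(t)|}{t}=\frac{f(t)}{tf'(t)}-1,
\]
and apply \textbf{h1} with continuity of $f'$: $f(t)/t\to f'(0)=-1$ and $f'(t)\to -1$, so the ratio tends to $1$ and the whole expression vanishes.

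Finally I would verify $\rho>0$ (hence $\sigma>0$, as $\kappa>0$ by openness of $\Omega$) and then read off \eqref{eq:001}. The defining condition for $\rho$ rewrites, using the identity $f(t)/(tf'(t))-1=|n_f(t)|/t$, as
\[
\omega_1(1+\vartheta)(1+\lambda)\,\frac{|n_f(t)|}{t}+\omega_1[(1+\vartheta)\lambda+\vartheta]+\omega_2<1.
\]
As $t\downarrow 0$ the first summand vanishes by the previous step, so the left-hand side tends to $\omega_1[(1+\vartheta)\lambda+\vartheta]+\omega_2$. The standing hypothesis $\lambda<(1-\omega_2-\omega_1\vartheta)/(\omega_1(1+\vartheta))$ rearranges exactly to this limit being strictly less than $1$, so continuity of $f$ and $f'$ forces the displayed inequality to hold on some interval $(0,\delta)$, giving $\rho>0$. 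Multiplying that inequality by $t$ for $t\in(0,\rho)$ yields the upper bound in \eqref{eq:001}; the lower bound is immediate from $|n_f(t)|>0$ and $\omega_1>\omega_2\ge 0$.

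No step is a serious obstacle. The only care needed is the passage from strict monotonicity of $f'$ on the open interval $(0,t)$ to the strict integral inequality $f(t)<tf'(t)$, which is a standard consequence of monotonicity plus continuity; everything else is algebraic rearrangement and a continuity argument at $t=0^+$.
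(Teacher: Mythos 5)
Your proof is correct and follows essentially the same route as the paper's: continuity of $f'$ at $0$ for $\nu>0$, the strict inequality $f(t)<tf'(t)$ for $n_f(t)<0$ (you derive it from the integral representation of $f$, the paper from strict convexity of $f$ — an equivalent elementary step), the same rewriting of $|n_f(t)|/t$ for the limit, and the same continuity argument at $t=0^+$ for $\rho>0$ and \eqref{eq:001}. No gaps.
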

\begin{proof}

Firstly, since $f'$ is continuous  and  $f'(0)=-1$, it follows  that the constant  $\nu$ is positive. Hence,   {\bf h2} implies that $f'(t)<0$ for all $t\in [0,\nu)$, from which we conclude  that  $n_f$ is well defined.
On the other hand,  in view of {\bf h2},  we have $f$ is strictly convex in $[0,R)$.
Therefore, using $\nu\leq R$,  we obtain $f(0)>f(t)+f'(t)(0-t),$ for any $t\in  (0,\, \nu)$
which, combined with $f(0)=0$ and  $f'(t)<0$ for any $t\in (0, \nu)$, proves the inequality in \eqref{a23}.
Now, using  the fact that $f(0)=0$ and $n_f(t)<0$ for all $t \in\,  (0,\,\nu)$, we obtain
\begin{equation} \label{eq:rho}
\frac{|n_f(t)|}{t}= \frac{1}{t}\left(\frac{f(t)}{f'(t)}-t\right)=\frac{1}{f'(t)} \left(\frac{f(t)-f(0)}{t-0}\right)-1, \quad  t\in (0,\,\nu).
\end{equation}
Since  $f'(0)\neq 0$, the second statement in \eqref{a23} follows by taking limit in~\eqref{eq:rho}, as $t$  $\downarrow 0$.

It remains to prove the last part of the proposition. First,
as $\lambda<[1 - \omega_2 - \omega_1 \vartheta]/ \omega_1(1+\vartheta)$, we have
$[1-\omega_1(1+\vartheta)\lambda - \omega_1\vartheta - \omega_2] / \omega_1(1+\vartheta)(1+\lambda) > 0$.
Hence, using \eqref{a23}, we conclude that there exists
$\delta>0$ such that
$$
0<\frac{|n_f(t)|}{t}<\frac{1- \omega_1(1+\vartheta)\lambda -\omega_1\vartheta -\omega_2}{\omega_1(1+\vartheta)(1+\lambda)},
 \quad  t\in \, (0, \delta),
$$
or, equivalently,
$$
0<\omega_1(1+\vartheta)(1+\lambda)\frac{|n_f(t)|}{t}+\omega_1[(1+\vartheta) \lambda +\vartheta] +\omega_2  < 1,\quad   t\in\,  (0, \delta).
$$
Hence,  $\rho$  is positive which in turn implies that $\sigma$ is positive and  \eqref{eq:001}  holds.
\end{proof}

The following lemma gives the some relationships between the majorant function $f$ and the nonlinear operator $F$.


\begin{lemma} \label{lemgeral:relateF_nf}
Let  $x \in  B\left(x_*,\min\{\kappa,\nu\}\right)$. Then the function $F'(x)$ is invertible  and the following estimates hold:
\begin{itemize}
\item [a)]  $\|F'(x)^{-1}F'(x_*)\|\leqslant  1/|f'(\| x-x_*\|)|;$
\item [b)] $\|F'(x)^{-1}F(x)\|\leq f(\|x-x_*\|)/ f' (\|x-x_*\|);$
\item [c)] $\|F'(x_*)^{-1}\left[ F(x_*)-F(x)-F'(x)(x_*-x)\right]\|\leq  f' (\|x-x_*\|)\|x-x_*\|-f(\|x-x_*\|)$.
\end{itemize}
\end{lemma}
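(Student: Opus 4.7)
The plan is to handle the three claims sequentially, with (a) setting up invertibility via a Banach-lemma argument, (c) established by an integral representation of the Taylor remainder, and (b) following by combining (a) and (c).

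For (a), I would bound $\|I - F'(x_*)^{-1}F'(x)\|$ by specializing the majorant hypothesis \eqref{Hyp:MH} to $\tau=0$, which gives
\[
\|F'(x_*)^{-1}[F'(x)-F'(x_*)]\| \leq f'(\|x-x_*\|) - f'(0) = f'(\|x-x_*\|) + 1.
\]
Since $\|x-x_*\| < \nu$, Proposition~\ref{pr:incr1} yields $f'(\|x-x_*\|) < 0$, so this quantity is strictly less than $1$. The Banach lemma then shows $F'(x_*)^{-1}F'(x)$ is invertible with $\|(F'(x_*)^{-1}F'(x))^{-1}\| \le 1/(1-(f'(\|x-x_*\|)+1)) = 1/|f'(\|x-x_*\|)|$, which immediately gives (a).

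For (c), I would use the fundamental theorem of calculus to write
\[
F(x_*) - F(x) - F'(x)(x_*-x) = \int_0^1 \bigl[F'(x_* + (1-\tau)(x-x_*)) - F'(x)\bigr](x_*-x)\,d\tau,
\]
apply $F'(x_*)^{-1}$ inside the integral, and invoke \eqref{Hyp:MH} with the parameter $1-\tau$ to bound the integrand by $\bigl(f'(\|x-x_*\|) - f'((1-\tau)\|x-x_*\|)\bigr)\|x-x_*\|$. A change of variables $u=(1-\tau)\|x-x_*\|$ turns the resulting integral into $f'(\|x-x_*\|)\|x-x_*\| - f(\|x-x_*\|) + f(0)$, and since $f(0)=0$ this matches the claimed bound exactly.

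For (b), the identity $F(x_*)=0$ combined with (c) gives $F(x) = F'(x)(x-x_*) - R$, where $R := F(x_*)-F(x)-F'(x)(x_*-x)$. Then
\[
F'(x)^{-1}F(x) = (x-x_*) - F'(x)^{-1}F'(x_*)\bigl[F'(x_*)^{-1}R\bigr],
\]
and applying (a) to the first factor and (c) to the second, together with $f'<0$ on $(0,\nu)$, produces the algebraic simplification
\[
\|F'(x)^{-1}F(x)\| \leq \|x-x_*\| + \frac{f'(\|x-x_*\|)\|x-x_*\|-f(\|x-x_*\|)}{|f'(\|x-x_*\|)|} = \frac{f(\|x-x_*\|)}{f'(\|x-x_*\|)}.
\]
None of the steps presents a genuine obstacle; the only delicate point is bookkeeping the substitution in the integral of (c) so that the boundary term $f(0)$ vanishes and the signs align with $f'<0$ in (b).
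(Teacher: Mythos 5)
Your argument is correct and takes essentially the same route as the paper, which simply defers to Lemmas 10--12 of \cite{MAX1}; those are proved exactly as you do, via the Banach lemma applied to \eqref{Hyp:MH} with $\tau=0$, the integral form of the Taylor remainder combined with \eqref{Hyp:MH} at parameter $1-\tau$, and the combination of (a) and (c) using $F(x_*)=0$. All the sign bookkeeping checks out (in particular $f(t)<0$ and $f'(t)<0$ on $(0,\nu)$, so the bound $f(t)/f'(t)$ in (b) is positive), so the proposal is a complete proof.
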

\begin{proof}
The proof follows the same pattern as the proofs of Lemmas 10, 11 and 12 in~\cite{MAX1}.
\end{proof}
The next result presents a basic property of  the CondG procedure whose the proof can be found  in \cite[lemma~4]{CondG}.
\begin{lemma}  \label{pr:condi}
For any $y,\tilde y\in \mathbb{R}^n$, $x, \tilde x \in C$  and $\mu \geq 0$, we have
$$
\| \mbox{CondG}(y,x,\mu) - \mbox{CondG}(\tilde y,\tilde x,0)\|\leq  \|y-\tilde y\|+ \sqrt{2\mu}.
$$
\end{lemma}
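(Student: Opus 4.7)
The plan is to recognize the CondG procedure as a Frank--Wolfe iteration applied to the strongly convex quadratic $\phi_y(z):=\tfrac12\|z-y\|^2$ over $C$. Since $\nabla\phi_y(z)=z-y$, the linear subproblem in step P1 returns exactly minus the Wolfe gap of $\phi_y$ at $z_t$, and the unique minimizer of $\phi_y$ on $C$ is the metric projection $P_C(y)$. So the stopping test $g_t^\ast\geq-\varepsilon$ is just an $\varepsilon$--optimality certificate for $\phi_y$ on $C$.

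First I would bound the suboptimality at termination. Taking $u=P_C(y)$ in the convexity inequality $\phi_y(u)\geq \phi_y(z_t)+\langle z_t-y,u-z_t\rangle$ yields
\begin{equation*}
\phi_y(z_t)-\phi_y(P_C(y)) \;\leq\; \langle z_t-y, z_t - P_C(y)\rangle \;\leq\; -g_t^\ast \;\leq\; \varepsilon.
\end{equation*}
Combining this with the $1$--strong convexity of $\phi_y$ (the quadratic growth at the minimizer $P_C(y)\in C$, evaluated at $z_t\in C$) gives
\begin{equation*}
\tfrac12\|z_t-P_C(y)\|^{2} \;\leq\; \phi_y(z_t)-\phi_y(P_C(y)) \;\leq\; \varepsilon,
\end{equation*}
and hence $\|z-P_C(y)\|\leq \sqrt{2\varepsilon}$ for the output $z=\mbox{CondG}(y,x,\varepsilon)$.

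Specializing to $(y,x,\mu)$ and to $(\tilde y,\tilde x,0)$ then produces $\|z-P_C(y)\|\leq\sqrt{2\mu}$ and $\tilde z=P_C(\tilde y)$, after which the classical nonexpansiveness of the metric projection, $\|P_C(y)-P_C(\tilde y)\|\leq \|y-\tilde y\|$, together with the triangle inequality closes the argument:
\begin{equation*}
\|z-\tilde z\| \;\leq\; \|z-P_C(y)\| + \|P_C(y)-P_C(\tilde y)\| + \|P_C(\tilde y)-\tilde z\| \;\leq\; \sqrt{2\mu}+\|y-\tilde y\|.
\end{equation*}

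The one delicate point is the $\varepsilon=0$ branch, which I expect to be the main obstacle. Since one always has $g_t^\ast\leq \langle z_t-\tilde y, z_t-z_t\rangle=0$, termination with $g_t^\ast\geq 0$ forces $g_t^\ast=0$, i.e.\ the first-order condition $\langle \tilde z-\tilde y, u-\tilde z\rangle\geq 0$ for every $u\in C$, which is exactly the variational characterization of $P_C(\tilde y)$. So either the procedure terminates in finitely many steps and $\tilde z=P_C(\tilde y)$ holds exactly, or one interprets $\mbox{CondG}(\tilde y,\tilde x,0)$ via the obvious limit/continuity argument letting $\varepsilon\downarrow 0$ in the bound just obtained. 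Once that point is handled, the remainder is standard Frank--Wolfe and projection bookkeeping.
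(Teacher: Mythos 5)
Your proof is correct. One thing to note at the outset: the paper does not prove this lemma itself, but defers to \cite[Lemma~4]{CondG}, so the comparison is with the cited argument rather than with an in-paper proof. That argument is more elementary and never mentions the projection operator: the termination test gives, for the output $z=\mbox{CondG}(y,x,\varepsilon)$, the variational inequality $\langle z-y,u-z\rangle\geq-\varepsilon$ for all $u\in C$; writing this for both calls, testing the first with $u=\tilde z$ and the second with $u=z$, and adding yields
\begin{equation*}
\|z-\tilde z\|^{2}\;\leq\;\langle y-\tilde y,\,z-\tilde z\rangle+\mu\;\leq\;\|y-\tilde y\|\,\|z-\tilde z\|+\mu,
\end{equation*}
and the bound follows by solving this quadratic inequality in $\|z-\tilde z\|$. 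Your route instead interprets the procedure as Frank--Wolfe for $\phi_y(z)=\tfrac12\|z-y\|^{2}$, shows via convexity plus quadratic growth that the output lies within $\sqrt{2\mu}$ of $P_C(y)$, and finishes with nonexpansiveness of the metric projection. Both are sound and give the stated constant (the direct route in fact gives the slightly sharper $\|y-\tilde y\|+\sqrt{\mu+0}$ here); yours buys a more conceptual picture (the output is an approximate projection), at the price of invoking $P_C$ and its nonexpansiveness, which the reference's two-test-point trick avoids entirely. You are also right to flag that $\mbox{CondG}(\tilde y,\tilde x,0)$ need not terminate for arbitrary $\tilde y$, and that when it does terminate it returns exactly $P_C(\tilde y)$; in this paper the zero-tolerance call is only ever used with $\tilde y=\tilde x=x_*\in C$, where $g_1^*=0$ and the procedure stops immediately with output $x_*$, so the subtlety is harmless for the application.
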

Before presenting  the proof of Theorem~\ref{th:nt}, we  first establish a technical result which will be used to prove that
the sequence $\{x_k\}$ is contained in $B(x_*,\sigma)\cap C$ and the sequence $\{\|x_k-x_*\|\}$ is strictly decreasing.


\begin{lemma} \label{l:wdef}
Assume that $x_k \in C\cap B(x_*, \sigma)\backslash \{x_*\}$. Then, for every $k\geq 0$,
\begin{multline}\label{o698}
\|x_{k+1}-x_*\| \leq \omega_1(1+\vartheta)(1+\sqrt{2\theta_k})|n_f(\|x_k-x_*\|)|+(\omega_1[(1+\vartheta)\sqrt{2\theta_k}+\vartheta]+\omega_2)\|x_k-x_*\|,
\end{multline}
where $n_f$ is as in \eqref{eq:def.nf}.
As a consequence,
\begin{equation}\label{eq:contractionCondG}
\|x_{k+1}-x_*\| < \|x_{k}-x_*\|, \quad  k\geq 0.
\end{equation}
\end{lemma}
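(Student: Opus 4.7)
The plan is to start from the stability property of the CondG procedure. Noting that $\mathrm{CondG}(x_*,x_*,0)=x_*$ (the first call to the LO oracle returns gap $g_1^*=0$ and the stopping test passes immediately), Lemma~\ref{pr:condi} applied to the pair $(y_k,x_k,\theta_k\|s_k\|^2)$ and $(x_*,x_*,0)$ yields
$$\|x_{k+1}-x_*\|\le \|y_k-x_*\|+\sqrt{2\theta_k}\,\|s_k\|.$$
The rest of the proof reduces to bounding $\|y_k-x_*\|$ and $\|s_k\|$ in terms of $\|x_k-x_*\|$ and $|n_f(\|x_k-x_*\|)|$; the hypothesis $x_k\in B(x_*,\sigma)$ together with $\sigma\le\min\{\kappa,\nu\}$ allows me to invoke Lemma~\ref{lemgeral:relateF_nf} at $x_k$ throughout.

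For $\|y_k-x_*\|$, the natural decomposition obtained from $M_ks_k=-F(x_k)+r_k$ by inserting $F'(x_k)F'(x_k)^{-1}$ is
$$y_k-x_*=\bigl(I-M_k^{-1}F'(x_k)\bigr)(x_k-x_*)+M_k^{-1}F'(x_k)\bigl[(x_k-x_*)-F'(x_k)^{-1}F(x_k)\bigr]+M_k^{-1}r_k.$$
The first term is handled by $\omega_2\|x_k-x_*\|$ via \eqref{con:qn}. The middle term is handled by $\omega_1|n_f(\|x_k-x_*\|)|$ using Lemma~\ref{lemgeral:relateF_nf}(a)(c) combined with the algebraic identity $(f'(t)t-f(t))/|f'(t)|=f(t)/f'(t)-t=|n_f(t)|$, valid since $f'(t)<0$ and $f(t)\le f'(t)t$ by the convexity of $f$. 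The delicate term is the residual: I would write $F'(x_k)^{-1}r_k=(P_kF'(x_k))^{-1}(P_kr_k)$ and $\|P_kF(x_k)\|\le\|P_kF'(x_k)\|\cdot\|F'(x_k)^{-1}F(x_k)\|$, so that the two parts of \eqref{con:vk} collapse via the condition-number identity into $\|F'(x_k)^{-1}r_k\|\le\vartheta\|F'(x_k)^{-1}F(x_k)\|$. Using Lemma~\ref{lemgeral:relateF_nf}(b) rewritten as $\|F'(x_k)^{-1}F(x_k)\|\le\|x_k-x_*\|+|n_f(\|x_k-x_*\|)|$ together with the first inequality in \eqref{con:qn} then produces $\|M_k^{-1}r_k\|\le \omega_1\vartheta(\|x_k-x_*\|+|n_f(\|x_k-x_*\|)|)$. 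Summing the three estimates gives
$$\|y_k-x_*\|\le\omega_1(1+\vartheta)|n_f(\|x_k-x_*\|)|+(\omega_1\vartheta+\omega_2)\|x_k-x_*\|.$$

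An analogous split $s_k=-M_k^{-1}F'(x_k)F'(x_k)^{-1}F(x_k)+M_k^{-1}r_k$ together with the same ingredients yields $\|s_k\|\le \omega_1(1+\vartheta)(\|x_k-x_*\|+|n_f(\|x_k-x_*\|)|)$. Substituting into the opening inequality and regrouping the coefficients of $|n_f|$ and of $\|x_k-x_*\|$ produces exactly~\eqref{o698}. For the consequence~\eqref{eq:contractionCondG}, the constraint $\theta_k\le\lambda^2/2$ gives $\sqrt{2\theta_k}\le\lambda$, so the right-hand side of~\eqref{o698} is majorised by $\omega_1(1+\vartheta)(1+\lambda)|n_f(\|x_k-x_*\|)|+(\omega_1[(1+\vartheta)\lambda+\vartheta]+\omega_2)\|x_k-x_*\|$, which by~\eqref{eq:001} is strictly less than $\|x_k-x_*\|$ since $\|x_k-x_*\|\in(0,\sigma)\subseteq(0,\rho)$. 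I expect the residual estimate to be the main obstacle: one must convert the forcing condition~\eqref{con:vk}, stated in the preconditioned norm, into a bound on $\|M_k^{-1}r_k\|$ compatible with the rest of the decomposition, which is exactly the purpose of the product $\eta_k\operatorname{cond}(P_kF'(x_k))\le\vartheta$.
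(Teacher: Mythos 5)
Your proposal is correct and follows essentially the same route as the paper: the stability estimate of Lemma~\ref{pr:condi} against $\mbox{CondG}(x_*,x_*,0)=x_*$, the three-term decomposition of $y_k-x_*$ (your middle term $M_k^{-1}F'(x_k)[(x_k-x_*)-F'(x_k)^{-1}F(x_k)]$ is identical to the paper's $M_k^{-1}[F(x_*)-F(x_k)-F'(x_k)(x_*-x_k)]$ since $F(x_*)=0$), the condition-number argument converting \eqref{con:vk} into $\|F'(x_k)^{-1}r_k\|\le\vartheta\|F'(x_k)^{-1}F(x_k)\|$, and the bounds of Lemma~\ref{lemgeral:relateF_nf} combined with \eqref{eq:001} for the contraction. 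No gaps.
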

\begin{proof} First of all, since $\mbox{CondG}(x,x,0)=x,$ for all $x\in C$, it follows from INL-CondG method that
\begin{align*}
\|x&_{k+1} - x_*\|
=\left\|\mbox{CondG}\left(x_k - M^{-1}_k( F(x_k) - r_k), x_k, \theta_k \|M^{-1}_k( F(x_k) - r_k)\|^2 \right)-\mbox{CondG}(x_*, x_*, 0)\right\|.
\end{align*}
Hence, using  the Lemma \ref{pr:condi} with
\[
y = x_k - M ^{-1}_k (F(x_k) - r_k),  \quad   x=x_k, \quad \mu=\theta_k\|M^{-1}_k( F(x_k) - r_k)\|^2, \quad \tilde y= x_*, \quad  \tilde x=x_*,
\]
we obtain
\[
\|x_{k+1} - x_*\|  \leq\|x_k - M^{-1}_k (F(x_k) - r_k) - x_*\|+ \sqrt{2\theta_k} {\|M^{-1}_k( F(x_k)- r_k)\|}.
\]
Now, simple calculus yields
\begin{multline*}
x_k - M_k^{-1}( F(x_k) -r_k) - x_*  \\ = M^{-1}_k\big(F(x_{*})- F(x_k)-F'(x_k)(x_{*}- x_k)\big)+(M^{-1}_k F'(x_k)- I)(x_{*}- x_k)+ M^{-1}_k r_k.
\end{multline*}
Since, $x_k \in C\cap B(x_*, \sigma)\backslash \{x_*\}$, it follows from Lemma~\ref{lemgeral:relateF_nf} that
$F'(x_k)$ is invertible. Thus, combining the last two inequalities, we obtain
\begin{align*}
\left\|x_{k+1} - x_*\right\| &\leq\|M^{-1}_k F'(x_k)\|\|F'(x_k)^{-1}F'(x_{*})\|\|F'(x_{*})^{-1}[F(x_{*})- F(x_k) - F'(x_k)(x_{*} - x_k)]\|\\
&+\|M^{-1}_k F'(x_k) - I\|\|x_* - x_k\|+\|M^{-1}_k F'(x_k)\|\|F'(x_k)^{-1}P^{-1}_k\|\|P_k r_k\|\\
&+ \sqrt{2 \theta_k} \|M^{-1}_k F'(x_k)\|\|F'(x_k)^{-1} F(x_k)\| + \sqrt{2 \theta_k} \|M^{-1}_k F'(x_k)\|\|F'(x_k)^{-1} P^{-1}_k\| \|P_k r_k\|,
\end{align*}
which, combined with  \eqref{con:qn} and \eqref{con:vk}, yields
\begin{align}
\left\|x_{k+1}- x_* \right\| &\leq \omega_{1} \|F'(x_k)^{-1}F'(x_{*})\|\|F'(x_{*})^{-1}[F(x_{*}) - F(x_k) - F'(x_k)(x_{*}- x_k)]\|\nonumber \\ &+\omega_{2}  \|x_k - x_{*}\|+
\omega_{1} \eta_k  \|F'(x_k)^{-1} P^{-1}_k\|\|P_k F(x_k)\|  \nonumber\\
&+ \omega_1\sqrt{2 \theta_k}  \|F'(x_k)^{-1} F(x_k)\| +  \omega_1\eta _k\sqrt{2 \theta_k}  \|F'(x_k)^{-1} P^{-1}_k\| \|P_k F(x_k)\|.\label{a234}
\end{align}
On the other hand, using  the third inequality in \eqref{con:vk}, we find
\begin{align*}
\omega_{1}\eta_k  \|F'(x_k)^{-1}P^{-1}_k\|\|P_k F(x_k)\| &\leq
\omega_{1}\eta_k  \|(P_k F(x_k))^{-1}\|\|P_k F'(x_k)\|\|F'(x_k)^{-1} F(x_k)\|\\
&\leq \omega_{1}\vartheta  \|F'(x_k)^{-1} F(x_k)\|.
\end{align*}
Hence, it follows from \eqref{a234} that
\begin{align*}
\left\|x_{k+1} - x_* \right\| &\leq\omega_{1} \|F'(x_k)^{-1}F'(x_{*})\|\|F'(x_{*})^{-1}[F(x_{*})- F(x_k)- F'(x_k)(x_{*}- x_k)]\|+\omega_{2} \|x_k -x_{*}\| \\
&+\omega_{1} \vartheta  \|F'(x_k)^{-1} F(x_k)\| + \omega_1\sqrt{2 \theta_k}  \|F'(x_k)^{-1} F(x_k)\| + \omega_1\vartheta\sqrt{2 \theta_k}  \|F'(x_k)^{-1} F(x_k)\|.
\end{align*}
Combining the last inequality with items (a), (b) and (c) of Lemma~\ref{lemgeral:relateF_nf}, we conclude that
\begin{align*}
\left\|x_{k+1} - x_* \right\| &\leq \omega_{1}\left(\frac{f'(||x_k - x_*||) ||x_k - x_*|| - f(||x_k - x_*||)}{|f'(||x_k -x_{*}||)|}\right)+\omega_{2}   \|x_k -x_*\|  \\ & +\left(\omega_{1}\vartheta +
\omega_1 \sqrt{2 \theta_k}  +\omega_1 \vartheta \, \sqrt{2 \theta_k}\right)\frac{f(||x_k - x_*||)}{f'(||x_k - x_*||)} .
\end{align*}
The latter inequality, definition of  $n_f$ in \eqref{eq:def.nf} and the fact that $f'(||x_k - x_*||) < 0$ imply that
\begin{align*}
\|& x_{k+1}- x_* \| \\
&\leq \omega_{1} \left( \frac{f(||x_k - x_*||)}{f'(||x_k - x_{*}||)}-||x_k -x_*|| \right)+\omega_{2}  \|x_k -x_{*}\|+ \omega_1 [(1+\vartheta)\sqrt{2\theta_k} + \vartheta]  \frac{f(||x_k - x_*||)}{f'(||x_k - x_*||)}\\
& =  \omega_1 \, |n_{f}(||x_k - x_*||)| + \omega_2  ||x_k - x_*|| + \omega_1 [(1+\vartheta)\sqrt{2\theta_k} + \vartheta]  (|n_{f}(||x_k - x_*||)| + ||x_k - x_*||).
\end{align*}
Hence, inequality \eqref{o698} now follows by simple calculus.
Since $\sqrt{2\theta_k}\leq \lambda$ and $0<\|x_k - x_*\|<\sigma\leq \rho$, it follows from  \eqref{eq:001} with $t=\|x_k - x_*\|$ that
$$
\omega_1(1+\vartheta)(1+\sqrt{2\theta_k})|n_f(\|x_k - x_*\|)|+(\omega_1[(1+\vartheta)\sqrt{2\theta_k}+\vartheta]+\omega_2) \|x_k-x_*\|<\|x_k -x_*\|,
 $$
which, combined with \eqref{o698}, yields \eqref{eq:contractionCondG}.
\end{proof}

We are now ready to prove  Theorem~\ref{th:nt}.
\begin{proof}[\bf Proof of Theorem~\ref{th:nt}:]
Since $x_0\in C \cap B(x_*, \sigma)\backslash \{x_*\}$, combining the first statement of Lemma~\ref{lemgeral:relateF_nf}, inequality \eqref{eq:contractionCondG} and an induction argument, it is immediate to conclude that the sequence $\{x_k\}$ is contained in $B(x_*,\sigma)\cap C$.

Let us prove that $\{x_k\}$ converges to  $x_*$. Since, $\|x_{k}-x_*\|<\sigma\leq \rho$ for all  $k\geq 0$,
the first inequality in \eqref{eq:xklinearconv} follows trivially from  \eqref{eq:contractionCondG}.
Therefore, $\{\|x_{k}-x_*\|\}$ is a bounded strictly decreasing sequence and hence it converges  to some $\ell_* \in [0,\rho)$. Moreover, taking into account that $n_f(\cdot)$ is continuous in $[0, \rho)$, in particular, from \eqref{o698} we have
$$ \ell_{*} \leq \omega_1(1+\vartheta)(1+\lambda)|n_f(\ell_*)|+(\omega_1[(1+\vartheta)\lambda+\vartheta]+\omega_2)\ell_*,$$
which, combined with  \eqref{eq:001}, implies that $\ell_*=0$ and consequently $x_k \rightarrow x_*$.

Now, from \eqref{o698} we obtain
\[
\frac{\|x_{k+1}-x_*\|}{\|x_{k}-x_*\|}\leq \omega_1(1+\vartheta)(1+\sqrt{2\theta_k})\frac{|n_f(\|x_{k}-x_*\|)|}{\|x_{k}-x_*\|}+ \omega_1[(1+\vartheta)\sqrt{2\theta_k}+\vartheta]+\omega_2, \quad  k\geq 0.
\]
In order to prove the asymptotic rate in~\eqref{eq:xklinearconv}, just take the limit superior in the last inequality as $k \to \infty$ and use $\|x_{k}-x_*\|\to 0$, equality in \eqref{a23} and $\limsup_{k\to \infty}\theta_k=\tilde \theta$.

It remains to show the last part of the theorem. For this purpose, let us assume that {\bf h3} holds.
It follows from \eqref{eq:contractionCondG} and \eqref{o698} respectively that, for all $k\geq 0$,  $\|x_k-x_*\| < \|x_0-x_*\|$ and
\[
\|x_{k+1}-x_*\|\leq \omega_1(1+\vartheta)(1+\sqrt{2\theta_k})\frac{|n_f(\|x_k-x_*\|)|}{\|x_k-x_*\|^{p+1}}\|x_k-x_*\|^{p+1}
+(\omega_1[(1+\vartheta)\sqrt{2\theta_k}+\vartheta]+\omega_2)\|x_k-x_*\|.
\]
Therefore, the inequality \eqref{eq:xkispconvergent} follows due to assumption {\bf h3}, $\sqrt{2\theta_k}\leq \lambda$ and the definition of $n_f$ in ~\eqref{eq:def.nf}.
\end{proof}

\begin{remark}
Similar to the analysis in  \cite{CondG},  we could have defined a scalar sequence $\{t_k\}$, associated to the majorant function $f$, such that
\begin{equation}\label{eq:tk majo}
\|x_k - x_*\| \leq t_k, \quad  k\geq 0.
\end{equation}
Indeed,  if   $\{t_k\}$ is defined  as
\begin{equation*} \label{eq:tknk1}
t_0:=\|x_0-x_*\|, \quad  t_{k+1}:=\omega_1(1+\vartheta)(1+\sqrt{2\theta_k})|n_f(t_k)|+(\omega_1 [(1+\vartheta)\sqrt{2\theta_k}+\vartheta]+\omega_2)t_k, \quad  k\geq 0,
\end{equation*}
  it is possible to prove that \eqref{eq:tk majo} holds, and $\{t_k\}$ is well defined, is strictly decreasing and  converges to $0$. Moreover,
$ \limsup_{k\to \infty}t_{k+1}/t_k= \omega_1[(1 + \vartheta)\sqrt{2\tilde \theta} + \vartheta] +\omega_2,$ where $ \tilde \theta=\limsup_{k\to \infty}{\theta_k}$.
\end{remark}

%

\section{Numerical experiments} \label{NunEx}

In this section, we  present the results of some preliminaries numerical tests which show the computational feasibility of the INL-CondG method.  The experiments were carried out on a set of 15 well-known  box-constrained nonlinear systems (i.e., problem~\eqref{eq:p} with $ C=\{x \in \mathbb{R}^n: l \leq x \leq u\}, $ where $l \in \mathbb{R}^{n}$ and $u \in (\mathbb{R}\cup \{\infty\})^n$) with dimensions between $n=400$ and $n=10000$ (see Table~1).
We made three implementations of our INL-CondG method which differ in the way that the  approximation matrices $M_k$'s are  built. In the first implementation, the  matrices $M_k$'s were approximated by finite difference (FD), whereas in the second and third ones, we used the Broyden-Schubert Update (BSU) \cite{Broyden1971, Schubert1970}  and Bogle-Perkins Update (BPU) \cite{Bogle1990}, respectively.  The three resulting methods described above are denoted by FD-INL-CondG, BSU-INL-CondG and BPU-INL-CondG, respectively.
In the implementations of the BSU-INL-CondG and BPU-INL-CondG methods, the  matrices $M_k$'s were approximated by finite difference when $k = 0$ and mod$(k-1, 5) = 0$.
This strategy to periodically compute   the Jacobian matrix  $F'$ seems to be crucial for the robustness of these derivative-free methods.
For a comparison purpose,  we also run the constrained Dogleg solver (CoDoSol) which is a MATLAB package based on the constrained Dogleg method  \cite{Bellavia2012}, and available on  the web site  {\it http://codosol.de.unifi.it}.


The computational results were obtained using  MATLAB R2016a on a 2.5GHz Intel(R)
i5  with 6GB of RAM and Windows 7 ultimate system. The stopping criterion  $\|F(x_k)\|_{\infty}\leq10^{-6}$  was used, and
a failure  was declared if the number of iterations was greater than $300$ or no progress  was detected.
The starting points were defined as   $x_0(\gamma)= l+ 0.25\gamma (u -l) $ where $\gamma=1,2,3$ for problems having finite lower and upper bounds and  $x_0(\gamma) = 10^{\gamma} (1, \ldots , 1)^{T}$ with $\gamma = 0, 1, 2$, for problems with infinite upper bound. However, since $x_0(0)$ is a solution of Pb13, we used $x_0(3)$ instead. In the  implementations of the  INL-CondG method, the error parameter $\theta_k$ was set equal to $10^{-5}$ for all $k$ and  the
CondG Procedure   stopped when either the required accuracy  was obtained  or the maximum of $300$ iterations were performed.
The parameters of CoDoSol were set as the default choice recommended  by the authors (see \cite[Subsection~4.1]{Bellavia2012}). It worth pointing out that the Jacobian matrices in the latter solver are approximated by  finite difference.

\begin{table}[h]
\centering
\caption{Test problems }
\vspace{0.5cm}
{\small
\begin{tabular}{|c|c|c|c|}
\hline
Problem & Name and souce  & n & Box \\
\hline
Pb 1  & Chandrasekhar's H-equation $c = 0.99$ \cite{Kelley} & 400 & $[0,5]$ \\
Pb 2  & Discrete boundary value function \cite[Problem 28]{more1}   & 500 & [-100,100]\\
Pb 3  & Troesch \cite[Problem 4.21]{Vlcek} & 500  & [-1,1]\\
Pb 4  & Discrete integral \cite[Problem 29]{more1}& 1000 & [-10,10]\\
Pb 5  & Trigexp 1 \cite[Problem 4.4]{Vlcek} & 1000 & [-100,100] \\
Pb 6 & Problem 74 \cite{Vlcek2003}              & 1000  & [0,10]\\
Pb 7 & Problem 77 \cite{Vlcek2003}              & 2000  & [0,10]\\
Pb 8 & Function 15 \cite{william2003}              & 2000  & [-10,0]\\
Pb 9  & Tridiagonal exponential \cite[Problem 4.18]{Vlcek} & 2000 & [$e^{-1}$, $e$]\\
Pb 10 & Trigonometric function \cite[Problem 8]{william2003}  & 2000  & [5,15]\\
Pb 11 & Zero Jacobian function \cite[Problem 19]{william2003}  & 2000  & [0,10]\\
Pb 12  & Trigonometric system \cite[Problem 4.3]{Vlcek} & 5000  & [$\pi$, $2\pi$]\\
Pb 13  & Five diagonal \cite[Problem 4.8]{Vlcek} & 5000  & [1,$\infty$]\\
Pb 14 & Seven diagonal \cite[Problem 4.9]{Vlcek} & 5000  & [0,$\infty$]\\
Pb 15  & Countercurrent reactors \cite[Problem 4.1]{Vlcek} & 10000 & [-1,10]\\
\hline
\end{tabular}
}
\end{table}

Table~2 reports the performance of the FD-INL-CondG, BSU-INL-CondG and BPU-INL-CondG methods, and CoDoSol for solving the 15 problems considered.
In  table~2, ``$\gamma$"  is   the scalar used to compute the starting point $x_0(\gamma)$, ``Iter"  is  the number of iterations of the methods  and  ``$\|F\|_{\infty}$" is the infinity norm of $F$ at the final iterate $x_k$. Finally, the symbol ``$*$" indicates a failure.

\begin{table}[h]
\centering
\caption{\footnotesize Performance of the  FD-INL-CondG, BSU-INL-CondG and BPU-INL-CondG methods and CoDoSol}
\vspace{0.5cm}
{\footnotesize
\begin{tabular}{|cc|cc|cc|cc|cc|}
\hline
  &  &  \multicolumn{2}{|c|}{FD-INL-CondG} & \multicolumn{2}{|c|}{BSU-INL-CondG}  &  \multicolumn{2}{|c|}{BPU-INL-CondG} & \multicolumn{2}{|c|}{CoDoSol} \\
\hline
 Problem& $\gamma$   & Iter & $\|F\|_{\infty}$ & Iter & $\|F\|_{\infty}$  & Iter & $\|F\|_{\infty}$ & Iter & $\|F\|_{\infty}$\\
\hline
Pb 1 &1 &5   &$1.39e$-$11$   & 5 & $9.22e$-$7$      & 5 & $6.99e$-$7$  & 7  & $1.74e$-$11$ \\
     &2 &6   &$4.07e$-$9$    & 7 & $3.58e$-$8$      & 7 & $2.42e$-$8$  & 7  & $8.25e$-$7$ \\
     &3 &5   &$1.71e$-$7$    & 7 & $5.88e$-$11$     & 7 & $1.13e$-$9$  & *  &  \\
\hline
 Pb  2 &1 &9  & $3.30e$-$8$   &12 & $4.78e$-$07$    & 12 & $3.10e$-$8$    & 14 & $3.67e$-$10$ \\
       &2 &1  & $2.59e$-$7$   & 1 & $2.59e$-$7$     & 1  & $2.59e$-$7$    & 2  & $2.19e$-$9$ \\
       &3 &9  & $2.22e$-$7$   &12 & $2.92e$-$7$     & 12 & $2.06e$-$8$    & 14 & $9.18e$-$9$\\
\hline
 Pb  3 &1 &6   & $2.18e$-$7$  & 13 & $7.99e$-$7$       & 8   & $3.60e$-$8$   & 9 & $1.91e$-$9$\\
       &2 &7   & $7.71e$-$8$  & 11 & $2.35e$-$7$       & 9   & $6.65e$-$8$   & 6 & $5.58e$-$9$ \\
       &3 &6   & $2.18e$-$7$  & 9  & $2.02e$-$7$       & 8   & $4.03e$-$8$   & 7 & $1.56e$-$7$  \\
\hline
  Pb  4 &1  &5  & $2.66e$-$10$  & 7 & $9.12e$-$12$     & 7  & $2.62e$-$12$    & 9 & $1.34e$-$10$\\
        &2  &3  & $2.72e$-$11$  & 3 & $2.00e$-$9$      & 3  & $2.00e$-$9$     & 3 & $1.04e$-$7$ \\
        &3  &6  & $4.64e$-$11$  & 7 & $2.23e$-$8$      & *  &                 & 9 & $3.49e$-$8$\\
 \hline
   Pb   5 &1 &20  &$2.60e$-$8$   & 169 & $5.46e$-$9$     & 28  & $2.68e$-$8$  & 21 & $9.01e$-$7$ \\
          &2 &9   &$3.40e$-$11$  & 13 & $1.05e$-$7$      & 12  & $8.00e$-$10$ & 10 & $3.46e$-$9$ \\
          &3 &13  &$5.21e$-$9$   & 19 & $3.64e$-$10$     & 17  & $2.78e$-$8$  & 23 & $2.41e$-$8$\\
\hline
Pb  6 &1  &5    &$5.72e$-$7$   & 7  & $1.67e$-$7$   & 7  & $2.01e$-$8$   & 7 & $3.39e$-$7$ \\
      &2  &13   &$2.72e$-$11$  & 42 & $1.29e$-$8$   & 69 & $2.91e$-$9$   & 9 & $2.32e$-$8$\\
      &3  &9    &$2.22e$-$8$   & 25 & $1.73e$-$7$   & 12 & $2.27e$-$8$   &12 & $3.07e$-$8$\\
      \hline
Pb    7 &1 &6  &$2.19e$-$10$   & 7  & $9.34e$-$9$    & 7  & $9.95e$-$9$  & 9  & $4.22e$-$7$\\
        &2 &8  &$6.44e$-$11$   & 10 & $1.34e$-$9$    & 10 & $3.26e$-$8$  & 12 & $3.45e$-$9$\\
        &3 &9  &$7.83e$-$11$   & 11 & $8.40e$-$8$    & 12 & $1.40e$-$11$ & 13 & $2.73e$-$7$\\
        \hline
Pb  8 &1 &7  &$4.75e$-$10$  & 11 & $6.48e$-$8$   & 9  &  $4.36e$-$8$   &13 & $6.70e$-$11$ \\
      &2 &6  &$3.00e$-$7$   & 10 & $1.36e$-$7$   & 8  &  $2.61e$-$7$   &12 & $1.75e$-$7$\\
      &3 &6  &$2.88e$-$13$  & 8  & $9.36e$-$8$   & 7  &  $4.29e$-$7$   &11 & $1.93e$-$9$ \\
      \hline
Pb   9  &1   &2   &$4.84e$-$14$   & 2 & $4.84e$-$14$   & 2  & $4.84e$-$14$    & 8 & $6.03e$-$14$ \\
        &2   &2   &$1.39e$-$13$   & 2 & $1.39e$-$13$   & 2  & $1.39e$-$13$    & 7 & $6.23e$-$14$\\
        &3   &2   &$2.98e$-$14$   & 2 & $2.98e$-$14$   & 2  & $2.98e$-$14$    & 7 & $3.96e$-$14$\\
        \hline
Pb   10 &1   &7    &$6.88e$-$10$  & 9 & $4.46e$-$10$    & 16  & $4.74e$-$9$   & 10 & $2.12e$-$11$\\
        &2   &3    &$1.45e$-$7$   & 4 & $2.22e$-$10$    & 4   & $1.62e$-$10$  & 4  & $3.36e$-$8$\\
        &3   &10   &$5.65e$-$7$   & 14& $7.83e$-$8$     & 73  & $6.41e$-$9$   & 12 & $1.28e$-$8$ \\
        \hline
Pb   11 &1   &17    &$7.28e$-$7$   & 22 & $9.58e$-$7$    & 27 & $7.51e$-$7$   & 22 & $2.56e$-$7$\\
        &2   &18    &$7.28e$-$7$   & 24 & $6.13e$-$7$    & 28 & $9.21e$-$7$   & 23 & $5.56e$-$7$\\
        &3   &19    &$4.09e$-$7$   & 25 & $5.39e$-$7$    & 30 & $9.31e$-$7$   & 24 & $4.60e$-$7$\\
        \hline
Pb   12 &1   &*   &  & *  &           & *   &   & 18 & $2.60e$-$8$\\
        &2   &*   &  & *  &           & *   &   & 17 & $9.00e$-$8$\\
        &3   &*   &  & *  &           & *   &   & 16 & $4.94e$-$9$ \\
        \hline
Pb   13 &1   &8     &$0.00e$+$0$   & 7 &  $0.00e$+$0$   & 5  & $0.00e$+$0$    & 17& $8.72e$-$10$\\
        &2   &12    &$0.00e$+$0$   & 22&  $0.00e$+$0$   & *  &                & * & \\
        &3   &16    &$0.00e$+$0$   & * &                & 7  & $0.00e$+$0$               & * & \\
        \hline
Pb   14 &0   &4     &$3.64e$-$10$  & 5 & $1.65e$-$7$   & 5  & $2.47e$-$8$    & 4 & $4.13e$-$10$ \\
        &1   &12    &$3.69e$-$7$   & 17& $1.32e$-$9$   & 17 & $2.03e$-$11$   & 17& $1.29e$-$8$\\
        &2   &20    &$1.09e$-$12$  & 28& $6.93e$-$10$  & *  &                & 25& $1.29e$-$11$\\
        \hline
Pb   15 &1   &11    &$6.82e$-$9$   & * &                       & 21 & $3.79e$-$7$    & 17& $2.25e$-$9$ \\
        &2   &12    &$3.06e$-$8$   &19 & $6.05e$-$7$           &  * &                & 19& $2.06e$-$9$\\
        &3   &13    &$5.98e$-$9$   &20 & $3.24e$-$7$           &  * &                & 20& $2.10e$-$9$\\
        \hline
\end{tabular}
}
\end{table}
From Table~2, we see that the FD-INL-CondG, BSU-INL-CondG and BPU-INL-CondG methods and CoDoSol successfully ended 42, 40, 37 and 42 times, respectively,  on a total of 45 runs.
The FD-INL-CondG method is comparable to or even slightly better than  CoDoSol, because it required less iterations in 35  cases in which  both methods successfully ended.
This behavior also has been observed in \cite{CondG} for some small and medium  scale problems.
Regarding the  methods whose the $F'$ is not
evaluated at each iteration, the BSU-INL-CondG method solved 3 problems more than   BPU-INL-CondG method, while the BPU-INL-CondG method required less (resp. more) iterations  than BSU-INL-CondG method in 11 (resp. 7) cases in which  both methods successfully ended. Hence, we may say that latter two methods had similar numerical performance and, for some problems, they are comparable to the methods in which all $M_k$'s are approximated  by finite difference. Finally, based on the previous discussion,
 we  conclude that the INL-CondG method  seems to be a promising tool for solving medium and large box-constrained systems of nonlinear equations.



\section*{Final remarks}

We proposed a method for solving constrained systems of nonlinear equations which is  a combination of inexact Newton-like and conditional gradient methods. Under appropriate hypotheses  and using a majorant condition, it was showed that the sequence generated by new method converge locally. Additionally, we were able to provide convergence results for two important classes of nonlinear functions, namely, one is for functions whose the derivative satisfies  H\"{o}lder-like  condition and the other is for functions that satisfies a Smale condition.
In order to show the practical behavior of the proposed method, we  tested it on medium- and large-scale problems from the literature. The numerical experiments  showed that it works quite well and compares favorably with the constrained dogleg method \cite{Bellavia2012}.

\let\OLDthebibliography\thebibliography
\renewcommand\thebibliography[1]{
  \OLDthebibliography{#1}
  \setlength{\parskip}{0.3pt}
  \setlength{\itemsep}{0pt plus 0.3ex}
}

{




{
\scriptsize 	

}
}

\end{document}